\date{\today}
\newtheorem{theorem}{Theorem}
\newtheorem{proposition}[theorem]{Proposition}
\newtheorem{lemma}[theorem]{Lemma}
\theoremstyle{definition}
\newtheorem{remark}[theorem]{Remark}
\begin{document}

\title[Congruences on the monoid of monotone injective partial
selfmaps of $L_n\times_{\operatorname{lex}}\mathbb{Z}$]{Congruences on the monoid of monotone injective partial selfmaps of $L_n\times_{\operatorname{lex}}\mathbb{Z}$ with co-finite domains and images}

\author[O.~Gutik and I.~Pozdniakova]{Oleg~Gutik and Inna Pozdniakova}
\address{Faculty of Mechanics and Mathematics,
National University of Lviv, Universytetska 1, Lviv, 79000, Ukraine}
\email{o\_gutik@franko.lviv.ua, ovgutik@yahoo.com, pozdnyakova.inna@gmail.com}

\keywords{Semigroup of bijective partial transformations, symmetric inverse semigroup, congruence. }

\subjclass[2010]{Primary 20M18, 20M20. Secondary 20M05, 20M15}

\begin{abstract}
We study congruences on the semigroup $\mathscr{I\!O}\!_{\infty}(\mathbb{Z}^n_{\operatorname{lex}})$ of monotone injective partial selfmaps of the set of $L_n\times_{\operatorname{lex}}\mathbb{Z}$ having co-finite domains and images, where $L_n\times_{\operatorname{lex}}\mathbb{Z}$ is the lexicographic product of $n$-elements chain and the set of integers with the usual linear order.  The structure of the sublattice of congruences on $\mathscr{I\!O}\!_{\infty}(\mathbb{Z}^n_{\operatorname{lex}})$ which contained in the least group congruence is described.
\end{abstract}

\maketitle



We follow the terminology of \cite{Howie1995,Lawson1998} and \cite{Petrich1984}. We shall denote the additive group of integers by $\mathbb{Z}(+)$.

An algebraic semigroup $S$ is called {\it inverse} if for any
element $x\in S$ there exists a unique $x^{-1}\in S$ such that
$xx^{-1}x=x$ and $x^{-1}xx^{-1}=x^{-1}$. The element $x^{-1}$ is
called the {\it inverse of} $x\in S$. If $S$ is an inverse
semigroup, then the function $\operatorname{inv}\colon S\to S$ which
assigns to every element $x$ of $S$ its inverse element $x^{-1}$ is
called an {\it inversion}.

If $\mathfrak{C}$ is an arbitrary congruence on a semigroup $S$,
then we denote by $\Phi_\mathfrak{C}\colon S\rightarrow
S/\mathfrak{C}$ the natural homomorphisms from $S$ onto the quotient
semigroup $S/\mathfrak{C}$. A congruence $\mathfrak{C}$ on a
semigroup $S$ is called \emph{non-trivial} if $\mathfrak{C}$ is
distinct from universal and identity congruences $\Delta_S$ on $S$, and
\emph{group} if the quotient semigroup $S/\mathfrak{C}$ is a group.
Every inverse semigroup $S$ admits the least (minimum) group
congruence $\sigma$:
\begin{equation*}
    a\sigma b \; \hbox{ if and only if there exists }\;
    e\in E(S) \; \hbox{ such that }\; ae=be
\end{equation*}
(see \cite[Lemma~III.5.2]{Petrich1984})

If $S$ is a semigroup, then we shall denote the subset of idempotents of $S$ by $E(S)$. If $S$ is an inverse semigroup, then $E(S)$ is closed under multiplication and we shall refer to $E(S)$ as a \emph{band} (or the \emph{band of} $S$). If the band $E(S)$ is a non-empty subset of $S$, then the semigroup operation on $S$ determines the following partial order $\leqslant$ on $E(S)$: $e\leqslant f$ if and only if $ef=fe=e$. This order is called the {\em natural partial order} on $E(S)$. A \emph{semilattice} is a commutative semigroup of idempotents. A semilattice $E$ is called {\em linearly ordered} or a \emph{chain} if its natural order is a linear order. A \emph{maximal chain} of a semilattice $E$ is a chain which is properly contained in no other chain of $E$.

If $S$ is a semigroup, then we shall denote the Green relations on
$S$ by $\mathscr{R}$, $\mathscr{L}$, $\mathscr{J}$, $\mathscr{D}$
and $\mathscr{H}$ (see \cite[Section~2.1]{CP}):
\begin{align*}
    &\qquad a\mathscr{R}b \mbox{ if and only if } aS^1=bS^1;\\
    &\qquad a\mathscr{L}b \mbox{ if and only if } S^1a=S^1b;\\
    &\qquad a\mathscr{J}b \mbox{ if and only if } S^1aS^1=S^1bS^1;\\
    &\qquad \mathscr{D}=\mathscr{L}\circ\mathscr{R}=
          \mathscr{R}\circ\mathscr{L};\\
    &\qquad \mathscr{H}=\mathscr{L}\cap\mathscr{R}.
\end{align*}
A semigroup $S$ is called \emph{simple} if $S$ contains no proper two-sided ideal, i.e., $S$ has a unique $\mathscr{J}$-class, and \emph{bisimple} if $S$ has a unique $\mathscr{D}$-class.

If $\alpha\colon X\rightharpoonup Y$ is a partial map, then by $\operatorname{dom}\alpha$ and $\operatorname{ran}\alpha$ we denote the domain and the range of $\alpha$, respectively.

Let $\mathscr{I}_\lambda$ denotes the set of all partial one-to-one transformations of an infinite set $X$ of cardinality $\lambda$ endowed with the following semigroup operation: $x(\alpha\beta)=(x\alpha)\beta$ if $x\in\operatorname{dom}(\alpha\beta)=\{ y\in\operatorname{dom}\alpha\mid y\alpha\in\operatorname{dom}\beta\}$, for
$\alpha,\beta\in\mathscr{I}_\lambda$. The semigroup $\mathscr{I}_\lambda$ is called the \emph{symmetric inverse semigroup} over the set $X$~(see \cite[Section~1.9]{CP}). The symmetric inverse semigroup was introduced by Wagner~\cite{Wagner1952} and it plays a major role in the theory of semigroups. An element  $\alpha\in\mathscr{I}_\lambda$ is called \emph{cofinite}, if the sets $\lambda\setminus\operatorname{dom}\alpha$ and $\lambda\setminus\operatorname{ran}\alpha$ are finite.

Let $(X,\leqslant)$ be a partially ordered set. We shall say that a partial map
$\alpha\colon X\rightharpoonup X$ is \emph{monotone} if $x\leqslant y$ implies $(x)\alpha\leqslant(y)\alpha$ for each  $x,y\in X$.

Let $\mathbb{Z}$ be the set of integers with the usual linear order ``$\le$''. For any positive integer $n$ by $L_n$ we denote the set $\{1,\ldots,n\}$ with the usual linear order ``$\le$''. On the Cartesian product $L_n\times\mathbb{Z}$ we define the lexicographic order, i.e.,
\begin{equation*}
    (i,m)\leqslant(j,n) \qquad \hbox{if and only if} \qquad (i<j) \quad \hbox{or} \quad (i=j \quad\hbox{and} \quad m\le n).
\end{equation*}
Later the set $L_n\times\mathbb{Z}$ with the lexicographic order we denote by $L_n\times_{\operatorname{lex}}\mathbb{Z}$. Also, it is obvious that the set $\mathbb{Z}\times L_n$ with the lexicographic order is order isomorphic to $(\mathbb{Z},\le)$.

By $\mathscr{I\!O}\!_{\infty}(\mathbb{Z}^n_{\operatorname{lex}})$ we denote a semigroup of injective partial monotone selfmaps of $L_n\times_{\operatorname{lex}}\mathbb{Z}$ with co-finite domains and images.
Obviously, $\mathscr{I\!O}\!_{\infty}(\mathbb{Z}^n_{\operatorname{lex}})$ is an
inverse submonoid of the semigroup $\mathscr{I}_\omega$ and $\mathscr{I\!O}\!_{\infty}(\mathbb{Z}^n_{\operatorname{lex}})$ is a countable semigroup. Also, by $\mathscr{I\!O}\!_{\infty}(\mathbb{Z})$ we denote a semigroup of injective partial monotone selfmaps of $\mathbb{Z}$ with cofinite domains and images.

Furthermore, we shall denote the identity of the semigroup
$\mathscr{I\!O}\!_{\infty}(\mathbb{Z}^n_{\operatorname{lex}})$ by $\mathbb{I}$ and
the group of units of
$\mathscr{I\!O}\!_{\infty}(\mathbb{Z}^n_{\operatorname{lex}})$ by $H(\mathbb{I})$.

Gutik and Repov\v{s} in \cite{GutikRepovs2011} showed that the semigroup $\mathscr{I}_{\infty}^{\!\nearrow}(\mathbb{N})$ of partial cofinite monotone injective transformations of the set of positive integers $\mathbb{N}$ has algebraic properties similar to those of the bicyclic semigroup: it is bisimple and all of its non-trivial semigroup homomorphisms are either isomorphisms or group
homomorphisms.

In \cite{GutikRepovs2012} Gutik and Repov\v{s} studied the semigroup $\mathscr{I}_{\infty}^{\!\nearrow}(\mathbb{Z})$ of partial cofinite monotone injective transformations of the set of integers $\mathbb{Z}$ and they showed that
$\mathscr{I}^{\!\nearrow}_{\infty}(\mathbb{Z})$ is bisimple and all of its non-trivial semigroup homomorphisms are either isomorphisms or group homomorphisms.

In the paper \cite{GutikPozdnyakova2014} we studied the semigroup
$\mathscr{I\!O}\!_{\infty}(\mathbb{Z}^n_{\operatorname{lex}})$. There we described
Green's relations on $\mathscr{I\!O}\!_{\infty}(\mathbb{Z}^n_{\operatorname{lex}})$,
showed that the semigroup $\mathscr{I\!O}\!_{\infty}(\mathbb{Z}^n_{\operatorname{lex}})$ is
bisimple and established its projective congruences. Also, there we proved that $\mathscr{I\!O}\!_{\infty}(\mathbb{Z}^n_{\operatorname{lex}})$ is finitely generated, every automorphism of $\mathscr{I\!O}\!_{\infty}(\mathbb{Z})$ is inner and showed that in the case $n\geqslant 2$ the semigroup $\mathscr{I\!O}\!_{\infty}(\mathbb{Z}^n_{\operatorname{lex}})$ has non-inner automorphisms. In \cite{GutikPozdnyakova2014} we proved that for every positive integer $n$ the quotient semigroup
$\mathscr{I\!O}\!_{\infty}(\mathbb{Z}^n_{\operatorname{lex}})/\sigma$, where $\sigma$ is the least group congruence on $\mathscr{I\!O}\!_{\infty}(\mathbb{Z}^n_{\operatorname{lex}})$, is isomorphic to the direct power $\left(\mathbb{Z}(+)\right)^{2n}$.


By Proposition~2.3$(iv)$ \cite{GutikPozdnyakova2014}, the semigroup $\mathscr{I\!O}\!_{\infty}(\mathbb{Z}^n_{\operatorname{lex}})$ is isomorphic to the direct power $\left(\mathscr{I\!O}\!_{\infty}(\mathbb{Z})\right)^{n}$. Fixing this isomorphism further we shall identify elements of the semigroup $\mathscr{I\!O}\!_{\infty}(\mathbb{Z}^n_{\operatorname{lex}})$ with elements of the direct product $\left(\mathscr{I\!O}\!_{\infty}(\mathbb{Z})\right)^{n}$, i.e., every element $\alpha$ of $\mathscr{I\!O}\!_{\infty}(\mathbb{Z}^n_{\operatorname{lex}})$ we present in the form $(\alpha_1,\alpha_2,\ldots,\alpha_n)$, where
all $\alpha_i$ belongs to $\mathscr{I\!O}\!_{\infty}(\mathbb{Z})$. Later by $\alpha_i^\circ$ we shall denote the element with the form $(\mathbb{I}_1,\ldots,\mathbb{I}_{i-1},\alpha_i,\mathbb{I}_{i+1},\ldots,\mathbb{I}_n)$, where $\mathbb{I}_{j}$ is the identity of the $j$-th factor of $\left(\mathscr{I\!O}\!_{\infty}(\mathbb{Z})\right)^{n}$
for all $j$ and $\alpha_i\in \left(\mathscr{I\!O}\!_{\infty}(\mathbb{Z})\right)$.
It is obvious that for every $\alpha=(\alpha_1,\ldots,\alpha_n)\in \mathscr{I\!O}\!_{\infty}(\mathbb{Z}^n_{\operatorname{lex}})$ we have that $\alpha=\alpha_1^\circ\ldots\alpha_n^\circ$.

For every $i=1,\ldots,n$ we define a binary relation $\sigma_{[i]}$ on the semigroup $\mathscr{I\!O}\!_{\infty}(\mathbb{Z}^n_{\operatorname{lex}})$ in the following way:
\begin{equation*}
    \alpha\sigma_{[i]}\beta \quad \hbox{ if and only if there exists an idempotent } \varepsilon\in\mathscr{I\!O}\!_{\infty}(\mathbb{Z}^n_{\operatorname{lex}}) \hbox{ such that } \alpha\varepsilon_i^\circ=\beta\varepsilon_i^\circ.
\end{equation*}

In \cite{GutikPozdnyakova2014} we proved that $\sigma_{[i]}$ is a congruence on $\mathscr{I\!O}\!_{\infty}(\mathbb{Z}^n_{\operatorname{lex}})$ for every $i=1,\ldots,n$. Also, there is shown that for any subset $\{i_1,\ldots,i_k\}\subseteq\{1,\ldots,n\}$ of distinct integers, the relation $\sigma_{[i_1,\ldots,i_k]}= \sigma_{[i_1]}\circ\ldots\circ\sigma_{[i_k]}$ is a congruence on $\mathscr{I\!O}\!_{\infty}(\mathbb{Z}^n_{\operatorname{lex}})$ and is described the properties of the congruence $\sigma_{[i_1,\ldots,i_k]}$ (see Propositions~2.11-2.13, 2.15 and 2.18 in \cite{GutikPozdnyakova2014}). Moreover,
$\sigma_{[1,2,\ldots,n]}$ is the least group congruence on the semigroup $\mathscr{I\!O}\!_{\infty}(\mathbb{Z}^n_{\operatorname{lex}})$.

For every $i=1,\ldots,n$ we define a map $\pi^i\colon \mathscr{I\!O}\!_{\infty}(\mathbb{Z}^n_{\operatorname{lex}})\rightarrow \mathscr{I\!O}\!_{\infty}(\mathbb{Z}^n_{\operatorname{lex}})$ by the formula $(\alpha)\pi^i=\alpha_i^\circ$, i.e., $(\alpha_1,\ldots,\alpha_i,\ldots,\alpha_n)\pi^i= (\mathbb{I}_1,\ldots,\mathbb{I}_{i-1},\alpha_i,\mathbb{I}_{i+1},\ldots,\mathbb{I}_n)$. Simple verifications show that the map $\pi^i\colon \mathscr{I\!O}\!_{\infty}(\mathbb{Z}^n_{\operatorname{lex}})\rightarrow \mathscr{I\!O}\!_{\infty}(\mathbb{Z}^n_{\operatorname{lex}})$ is a homomorphism. Let ${\pi^i}^\sharp$ be the congruence on the semigroup $\mathscr{I\!O}\!_{\infty}(\mathbb{Z}^n_{\operatorname{lex}})$ which is generated by the homomorphism $\pi^i$.

Let $S$ be an inverse semigroup. For any congruence $\rho$ on $S$ we define a congruence $\rho_{\min}$ on $S$ as follows:
\begin{equation*}
    a\rho_{\min}b \qquad \hbox{if and only if} \qquad ae=be \quad \hbox{for some} \quad e\in E(S)  \quad \hbox{and} \quad e\rho a^{-1}a\rho b^{-1}b,
\end{equation*}
(see: \cite[Section~III.2]{Petrich1984}). Then Proposition~2.17 of \cite{GutikPozdnyakova2014} implies that
\begin{equation*}
{\pi^i}^\sharp_{\min}= \sigma_{[1]}\circ\ldots\circ\sigma_{[i-1]}\circ\sigma_{[i+1]}\circ\ldots \circ\sigma_{[n]}
\end{equation*}
for every $i=1,\ldots,n$.

This paper is a continuation of \cite{GutikPozdnyakova2014} and we study congruences on the semigroup $\mathscr{I\!O}\!_{\infty}(\mathbb{Z}^n_{\operatorname{lex}})$.  Here we describe the structure of the sublattice of congruences on $\mathscr{I\!O}\!_{\infty}(\mathbb{Z}^n_{\operatorname{lex}})$ which contained in the least group congruence.

For arbitrary elements $\alpha=(\alpha_1,\ldots,\alpha_n)$ and $\beta=(\beta_1,\ldots,\beta_n)$ of the semigroup $\mathscr{I\!O}\!_{\infty}(\mathbb{Z}^n_{\operatorname{lex}})$ we define:
\begin{equation*}
    \mathbf{D}_{\alpha,\beta}=\left\{i\in\{1,\ldots,n\}\mid \alpha_i\neq\beta_i \right\}.
\end{equation*}
It is obvious that elements $\alpha, \beta\in \mathscr{I\!O}\!_{\infty}(\mathbb{Z}^n_{\operatorname{lex}})$ are equal if and only if $\mathbf{D}_{\alpha,\beta}=\varnothing$.

\begin{lemma}\label{lemma-1}
Let $\mathfrak{C}$ be a congruence on the semigroup $\mathscr{I\!O}\!_{\infty}(\mathbb{Z}^n_{\operatorname{lex}})$. Let $\alpha$ and $\beta$ be two distinct $\mathfrak{C}$-equivalent elements of the semigroup $\mathscr{I\!O}\!_{\infty}(\mathbb{Z}^n_{\operatorname{lex}})$. Then there exists an element $\varpi$ in $\mathscr{I\!O}\!_{\infty}(\mathbb{Z}^n_{\operatorname{lex}})$ such that $\mathbb{I}\mathfrak{C}\varpi$ and $\mathbf{D}_{\mathbb{I},\varpi}=\mathbf{D}_{\alpha,\beta}$.
\end{lemma}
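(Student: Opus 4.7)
The plan is to use the bisimplicity of $\mathscr{I\!O}\!_{\infty}(\mathbb{Z})$ (established in \cite{GutikPozdnyakova2014}) component-wise to transport the equivalence $\alpha\mathfrak{C}\beta$ into an equivalence of the form $\mathbb{I}\mathfrak{C}\varpi$ whose support matches $\mathbf{D}_{\alpha,\beta}$.

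First I would invoke bisimplicity: since each factor $\mathscr{I\!O}\!_{\infty}(\mathbb{Z})$ is bisimple with identity, for the element $\alpha=(\alpha_1,\ldots,\alpha_n)$ one can construct $\mu=(\mu_1,\ldots,\mu_n)$ and $\nu=(\nu_1,\ldots,\nu_n)$ in $S:=\mathscr{I\!O}\!_{\infty}(\mathbb{Z}^n_{\operatorname{lex}})$ with $\mu\alpha\nu=\mathbb{I}$; concretely, choose each $\mu_i$ to be a monotone bijection from $\mathbb{Z}$ onto $\operatorname{dom}\alpha_i$ and $\nu_i$ a monotone bijection from $\operatorname{ran}\alpha_i$ onto $\mathbb{Z}$ inverting $\alpha_i\circ\mu_i$. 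Applying the congruence to $\mu\alpha\nu\mathfrak{C}\mu\beta\nu$ yields $\mathbb{I}\mathfrak{C}\varpi$ with $\varpi:=\mu\beta\nu$.

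The inclusion $\mathbf{D}_{\mathbb{I},\varpi}\subseteq\mathbf{D}_{\alpha,\beta}$ is immediate: for $j\notin\mathbf{D}_{\alpha,\beta}$, the identity $\alpha_j=\beta_j$ gives $\varpi_j=\mu_j\beta_j\nu_j=\mu_j\alpha_j\nu_j=\mathbb{I}_j$.

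The harder direction is $\mathbf{D}_{\mathbb{I},\varpi}\supseteq\mathbf{D}_{\alpha,\beta}$. For indices $i\in\mathbf{D}_{\alpha,\beta}$ where $\operatorname{dom}\alpha_i\not\subseteq\operatorname{dom}\beta_i$ one sees $\varpi_i\neq\mathbb{I}_i$ because $\mu_i$ hits some point outside $\operatorname{dom}\beta_i$, making $\operatorname{dom}\varpi_i\subsetneq\mathbb{Z}$; for indices where the domains coincide but $\alpha_i(x^*)\neq\beta_i(x^*)$ at some $x^*$, the injectivity of $\nu_i$ forces $\varpi_i$ to differ from the identity at $\mu_i^{-1}(x^*)$. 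The main obstacle, and the case I expect to require the most care, is when $\alpha_i$ is a proper restriction of $\beta_i$ (i.e.\ $\operatorname{dom}\alpha_i\subsetneq\operatorname{dom}\beta_i$ with $\beta_i|_{\operatorname{dom}\alpha_i}=\alpha_i$): here $\beta_i$ and $\alpha_i$ coincide on $\operatorname{ran}\mu_i\subseteq\operatorname{dom}\alpha_i$, forcing $\varpi_i=\mathbb{I}_i$ no matter how $\mu_i,\nu_i$ are chosen.

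To handle this obstruction I would run the dual construction: pick $\mu',\nu'\in S$ with $\mu'\beta\nu'=\mathbb{I}$ (choosing $\operatorname{ran}\mu'_i=\operatorname{dom}\beta_i$) and form $\varpi':=\mu'\alpha\nu'\in[\mathbb{I}]_{\mathfrak{C}}$. Then $\varpi'_j=\mathbb{I}_j$ for $j\notin\mathbf{D}_{\alpha,\beta}$, while the analysis above, with the r\^oles of $\alpha$ and $\beta$ swapped, gives $\varpi'_i\neq\mathbb{I}_i$ exactly on those $i\in\mathbf{D}_{\alpha,\beta}$ for which $\beta_i$ is not a restriction of $\alpha_i$. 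Combining the two elements by a product in $[\mathbb{I}]_{\mathfrak{C}}$ (closed under multiplication and inversion), for instance $\varpi'':=\varpi\cdot(\varpi')^{-1}$, I obtain an element of $[\mathbb{I}]_{\mathfrak{C}}$ whose $j$-th component is $\mathbb{I}_j$ for $j\notin\mathbf{D}_{\alpha,\beta}$ and is non-trivial for each $i\in\mathbf{D}_{\alpha,\beta}$: when only one of $\varpi_i,\varpi'_i$ is non-identity the product is non-identity, and when both are non-identity the careful coordinated choice of $\mu_i,\nu_i,\mu'_i,\nu'_i$ (so that $\operatorname{ran}\mu_i=\operatorname{dom}\alpha_i$ and $\operatorname{ran}\mu'_i=\operatorname{dom}\beta_i$, while the $\nu$'s extend monotonically) prevents the product $\varpi_i(\varpi'_i)^{-1}$ from collapsing to $\mathbb{I}_i$, since this would require $\varpi_i=\varpi'_i$ with full domain, which is ruled out by $\alpha_i\neq\beta_i$ and the incomparability of domains in the remaining case.
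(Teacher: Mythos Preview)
Your core strategy coincides with the paper's: exploit the bisimplicity of $\mathscr{I\!O}\!_{\infty}(\mathbb{Z})$ coordinate-wise to find $\gamma,\delta$ with $\gamma\alpha\delta=\mathbb{I}$ and then set $\varpi=\gamma\beta\delta$. The paper, however, does not combine two dual constructions; instead it splits each coordinate $i\in\mathbf{D}_{\alpha,\beta}$ into the cases $\alpha_i\mathscr{H}\beta_i$ and $\alpha_i\not\mathscr{H}\beta_i$, and chooses $\gamma_i,\delta_i$ accordingly in a single pass. Your observation about the proper-restriction case is well taken: when $\alpha_i\leq\beta_i$ in the natural partial order, any $\gamma_i$ with $\operatorname{ran}\gamma_i\subseteq\operatorname{dom}\alpha_i$ satisfies $\gamma_i\beta_i=\gamma_i\alpha_i$, so $\gamma_i\beta_i\delta_i=\mathbb{I}_i$ is forced; this is a genuine subtlety that the paper's Case~2 computation handles imprecisely.

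The gap in your write-up is the ``both non-identity'' sub-case of the combination step. Your stated reason---``incomparability of domains in the remaining case''---is not the relevant obstruction, and does not cover all possibilities. Observe first that $\varpi_i(\varpi'_i)^{-1}=\mathbb{I}_i$ forces $\operatorname{dom}\varpi_i=\operatorname{dom}\varpi'_i=\mathbb{Z}$ and $\varpi_i=\varpi'_i$; tracing this back through your choices yields $\operatorname{dom}\alpha_i=\operatorname{dom}\beta_i$ and $\operatorname{ran}\alpha_i=\operatorname{ran}\beta_i$, i.e.\ $\alpha_i\mathscr{H}\beta_i$. So the dangerous sub-case is precisely the $\mathscr{H}$-equivalent one, not the one with incomparable domains. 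In that sub-case $\mu_i$ and $\mu'_i$ are monotone bijections onto the \emph{same} set and hence differ by a shift; a short computation then gives $\varpi'_i=\varpi_i^{-1}$ inside the group of units, so $\varpi''_i=\varpi_i(\varpi'_i)^{-1}=\varpi_i^{\,2}$. Since $H(\mathbb{I}_i)\cong\mathbb{Z}(+)$ is torsion-free and $\varpi_i\neq\mathbb{I}_i$, this square is non-trivial. That is the argument you need here; ``careful coordinated choice'' and domain-incomparability do not do the job.
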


\begin{proof}
By Proposition~2.3$(iv)$ from \cite{GutikPozdnyakova2014} the semigroup $\mathscr{I\!O}\!_{\infty}(\mathbb{Z}^n_{\operatorname{lex}})$ is isomorphic to the direct power $\left(\mathscr{I\!O}\!_{\infty}(\mathbb{Z})\right)^{n}$. We denote $\alpha=(\alpha_1,\ldots,\alpha_n)$ and $\beta=(\beta_1,\ldots,\beta_n)$. Then for every $i\in \mathbf{D}_{\alpha,\beta}$ we have that $\alpha_i\neq\beta_i$.

We fix an arbitrary $i\in \mathbf{D}_{\alpha,\beta}$. Then one of the following cases holds:
\begin{itemize}
  \item[1)] $\alpha_i\mathscr{H}\beta_i$ in $\mathscr{I\!O}\!_{\infty}(\mathbb{Z})$;
  \item[2)] $\alpha_i$ and $\beta_i$ are not $\mathscr{H}$-equivalent in $\mathscr{I\!O}\!_{\infty}(\mathbb{Z})$.
\end{itemize}

Suppose that case 1) holds. By Proposition~2.3 of \cite{GutikRepovs2012} the semigroup $\mathscr{I\!O}\!_{\infty}(\mathbb{Z})$ is bisimple and hence by Theorem~2.3 from \cite{CP} there exist $\gamma_i,\delta_i\in \mathscr{I\!O}\!_{\infty}(\mathbb{Z})$ such that $\eta_i=\gamma_i\alpha_i\delta_i$ and $\zeta_i=\gamma_i\beta_i\delta_i$ are distinct elements of the group of units of the semigroup $\mathscr{I\!O}\!_{\infty}(\mathbb{Z})$. Then we have that $\eta_i^{-1}\eta_i=\eta_i^{-1}\gamma_i\alpha_i\delta_i=\mathbb{I}_i$ is the unit of the semigroup $\mathscr{I\!O}\!_{\infty}(\mathbb{Z})$ and $\eta_i^{-1}\zeta_i=\eta_i^{-1}\gamma_i\beta_i\delta_i\neq\mathbb{I}_i$. Hence, without loss of generality we can assume that there exist elements $\gamma_i$ and $\delta_i$ of the semigroup $\mathscr{I\!O}\!_{\infty}(\mathbb{Z})$ such that $\gamma_i\alpha_i\delta_i=\mathbb{I}_i$ is the unit of $\mathscr{I\!O}\!_{\infty}(\mathbb{Z})$ and $\gamma_i\beta_i\delta_i\neq\mathbb{I}_i$.

Suppose that the elements $\alpha_i$ and $\beta_i$ are not $\mathscr{H}$-equivalent in $\mathscr{I\!O}\!_{\infty}(\mathbb{Z})$. Then by Proposition~2.1$(vii)$ of \cite{GutikRepovs2012} we have that at least one of the following conditions holds:
\begin{equation*}
    \operatorname{dom}\alpha_i\neq \operatorname{dom}\beta_i \qquad \hbox{or} \qquad \operatorname{ran}\alpha_i\neq \operatorname{ran}\beta_i.
\end{equation*}
Since every subset with finite complement in $\mathbb{Z}$ is order isomorphic to $\mathbb{Z}$ we conclude that there exist monotone bijective maps $\gamma_i\colon\mathbb{Z}\rightarrow\operatorname{dom}\alpha_i$ and $\delta_i\colon\operatorname{ran}\alpha_i\rightarrow\mathbb{Z}$. Then we have that $\gamma_i\alpha_i\delta_i$ is an element of the group of units of the semigroup $\mathscr{I\!O}\!_{\infty}(\mathbb{Z})$, because $\operatorname{dom}(\gamma_i\alpha_i\delta_i)= \operatorname{ran}(\gamma_i\alpha_i\delta_i)=\mathbb{Z}$.

Suppose we have that $\operatorname{dom}\alpha_i\neq \operatorname{dom}\beta_i$. If there exists an integer $k\in\operatorname{dom}\alpha_i$ such that $k\notin\operatorname{dom}\beta_i$, then $(k)\gamma_i^{-1}\in\operatorname{dom}(\gamma_i\alpha_i\delta_i)$ and $(k)\gamma_i^{-1}\notin\operatorname{dom}(\gamma_i\beta_i\delta_i)$. If there exists an integer $k\in\operatorname{dom}\beta_i$ such that $k\notin\operatorname{dom}\alpha_i$, then $(k)\gamma_i^{-1}\in\operatorname{dom}(\gamma_i\beta_i\delta_i)$ and $(k)\gamma_i^{-1}\notin\operatorname{dom}(\gamma_i\alpha_i\delta_i)$. Therefore, we get that $\operatorname{dom}(\gamma_i\beta_i\delta_i)\neq \operatorname{dom}(\gamma_i\alpha_i\delta_i)$.

Suppose we have that $\operatorname{ran}\alpha_i\neq \operatorname{ran}\beta_i$. If there exists an integer $k\in\operatorname{ran}\alpha_i$ such that $k\notin\operatorname{ran}\beta_i$, then $(k)\delta_i\in\operatorname{ran}(\gamma_i\alpha_i\delta_i)$ and $(k)\delta_i\notin\operatorname{ran}(\gamma_i\beta_i\delta_i)$. If there exists an integer $k\in\operatorname{ran}\beta_i$ such that $k\notin\operatorname{ran}\alpha_i$, then $(k)\delta_i\in\operatorname{ran}(\gamma_i\beta_i\delta_i)$ and $(k)\delta_i\notin\operatorname{ran}(\gamma_i\alpha_i\delta_i)$. This implies that  $\operatorname{ran}(\gamma_i\beta_i\delta_i)\neq \operatorname{ran}(\gamma_i\alpha_i\delta_i)$.

Since every translation on an arbitrary element of the group of units of the semigroup $\mathscr{I\!O}\!_{\infty}(\mathbb{Z})$ is a bijective map of the set of integers $\mathbb{Z}$, without loss of generality we can assume that the element $\gamma_i\alpha_i\delta_i$ is the unit of the semigroup $\mathscr{I\!O}\!_{\infty}(\mathbb{Z})$.

Next, we define elements $\gamma=(\gamma_1,\ldots,\gamma_n)$ and $\delta=(\delta_1,\ldots,\delta_n)$ of the semigroup $\mathscr{I\!O}\!_{\infty}(\mathbb{Z}^n_{\operatorname{lex}})$ in the following way. For $i\in \mathbf{D}_{\alpha,\beta}$ we define $\gamma_i$ and $\delta_i$ to be the elements of the semigroup $\mathscr{I\!O}\!_{\infty}(\mathbb{Z})$ so constructed above. For $i\in\{1,\ldots,n\}\setminus \mathbf{D}_{\alpha,\beta}$ we put $\gamma_i$ and $\delta_i$ are the elements of the semigroup $\mathscr{I\!O}\!_{\infty}(\mathbb{Z})$ such that $\gamma_i\alpha_i\delta_i= \gamma_i\beta_i\delta_i=\mathbb{I}_i$ is the unit of the semigroup $\mathscr{I\!O}\!_{\infty}(\mathbb{Z})$. The existence of so elements $\gamma_i$ and $\delta_i$ in $\mathscr{I\!O}\!_{\infty}(\mathbb{Z})$ follows from Theorem~2.3 of \cite{CP} and the fact that the semigroup $\mathscr{I\!O}\!_{\infty}(\mathbb{Z})$ is bisimple (see \cite[Proposition~2.3]{GutikRepovs2012}).

Hence we get that
\begin{equation*}
    \gamma\alpha\delta=\mathbb{I}, \quad \varpi=\gamma\beta\delta\neq\mathbb{I} \quad \hbox{and} \quad \varpi\mathfrak{C}\mathbb{I} \quad \hbox{in} \quad \mathscr{I\!O}\!_{\infty}(\mathbb{Z}^n_{\operatorname{lex}}).
\end{equation*}
Moreover, our construction implies that $\mathbf{D}_{\mathbb{I},\varpi}=\mathbf{D}_{\alpha,\beta}$.
\end{proof}

\begin{lemma}\label{lemma-2}
Let $\mathfrak{C}$ be a congruence on the semigroup $\mathscr{I\!O}\!_{\infty}(\mathbb{Z}^n_{\operatorname{lex}})$. Let $\alpha$ and $\beta$ be two distinct $\mathfrak{C}$-equivalent elements of the semigroup $\mathscr{I\!O}\!_{\infty}(\mathbb{Z}^n_{\operatorname{lex}})$. Then there exists an element $\psi$ in $\mathscr{I\!O}\!_{\infty}(\mathbb{Z}^n_{\operatorname{lex}})$ such that $\mathbb{I}\mathfrak{C}\psi$, $\mathbf{D}_{\mathbb{I},\psi}=\mathbf{D}_{\alpha,\beta}$ and elements $\mathbb{I}$ and $\psi$ are not $\mathscr{H}$-equivalent in $\mathscr{I\!O}\!_{\infty}(\mathbb{Z}^n_{\operatorname{lex}})$.
\end{lemma}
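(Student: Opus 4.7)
The plan is to first invoke Lemma~\ref{lemma-1} on the given pair $\alpha,\beta$ to produce $\varpi\in\mathscr{I\!O}\!_{\infty}(\mathbb{Z}^n_{\operatorname{lex}})$ satisfying $\mathbb{I}\mathfrak{C}\varpi$ and $\mathbf{D}_{\mathbb{I},\varpi}=\mathbf{D}_{\alpha,\beta}$. If $\varpi$ is not $\mathscr{H}$-equivalent to $\mathbb{I}$, then $\psi:=\varpi$ already has all the required properties and the proof is complete.

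The substantive case is when $\varpi\mathscr{H}\mathbb{I}$, so that $\varpi$ lies in the group of units $H(\mathbb{I})$. Then each coordinate $\varpi_i$ is a monotone bijection of $\mathbb{Z}$, hence a translation $x\mapsto x+m_i$, and the $\mathbf{D}$-condition forces $m_i\neq 0$ precisely for $i\in\mathbf{D}_{\alpha,\beta}$. I would then introduce an auxiliary idempotent $\varepsilon=(\varepsilon_1,\ldots,\varepsilon_n)$, defined by taking $\varepsilon_i=\mathbb{I}_i$ for $i\notin\mathbf{D}_{\alpha,\beta}$ and letting $\varepsilon_i$ be the partial identity on $\mathbb{Z}\setminus\{0\}$ for $i\in\mathbf{D}_{\alpha,\beta}$. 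Since a congruence on an inverse semigroup respects inversion, $\varpi^{-1}\mathfrak{C}\mathbb{I}$, so $\varpi\varepsilon\varpi^{-1}\mathfrak{C}\mathbb{I}\varepsilon\mathbb{I}=\varepsilon$. A direct coordinate-wise computation identifies $(\varpi\varepsilon\varpi^{-1})_i$ with the partial identity on $\mathbb{Z}\setminus\{-m_i\}$ for $i\in\mathbf{D}_{\alpha,\beta}$ and with $\varepsilon_i$ otherwise; hence the two idempotents $\varpi\varepsilon\varpi^{-1}$ and $\varepsilon$ are distinct and satisfy $\mathbf{D}_{\varpi\varepsilon\varpi^{-1},\varepsilon}=\mathbf{D}_{\alpha,\beta}$.

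The next step is to reapply Lemma~\ref{lemma-1} to this pair of distinct $\mathfrak{C}$-equivalent elements; this yields the sought-after $\psi$ with $\mathbb{I}\mathfrak{C}\psi$ and $\mathbf{D}_{\mathbb{I},\psi}=\mathbf{D}_{\alpha,\beta}$. The main obstacle, and the point at which one must look inside the construction of Lemma~\ref{lemma-1}, is confirming that the resulting $\psi$ is not $\mathscr{H}$-equivalent to $\mathbb{I}$. Because for each $i\in\mathbf{D}_{\alpha,\beta}$ the $i$-th coordinates of our two idempotents have strictly different domains, the construction inside Lemma~\ref{lemma-1} necessarily follows its second (non-$\mathscr{H}$-equivalent) branch in every such coordinate, producing monotone bijections $\gamma_i\colon\mathbb{Z}\to\mathbb{Z}\setminus\{-m_i\}$ together with $\delta_i=\gamma_i^{-1}$. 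A short domain calculation then shows $\operatorname{dom}\psi_i=\mathbb{Z}\setminus\{(0)\gamma_i^{-1}\}\subsetneq\mathbb{Z}$, using that $0\in\operatorname{ran}\gamma_i$ since $-m_i\neq 0$. Therefore $\psi_i\notin H(\mathbb{I}_i)$ for every $i\in\mathbf{D}_{\alpha,\beta}$, whence $\psi\notin H(\mathbb{I})$ and $\psi$ is not $\mathscr{H}$-equivalent to $\mathbb{I}$, completing the plan.
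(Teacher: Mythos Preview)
Your argument is correct and, in the substantive case $\varpi\in H(\mathbb{I})$, takes a genuinely different route from the paper. The paper handles this case by an explicit ``split shift'' conjugation: it defines a piecewise map $\chi_i$ (two cases, according to the sign of $m_i$) and sets $\psi=\chi\kappa\chi^{-1}$, then computes directly that $\operatorname{dom}\psi_i\subsetneq\mathbb{Z}$ for each $i\in\mathbf{D}_{\alpha,\beta}$. Your idea instead conjugates a carefully chosen idempotent $\varepsilon$ by $\varpi$ to manufacture a pair of distinct $\mathfrak{C}$-equivalent idempotents whose $i$-th coordinates have different domains for every $i\in\mathbf{D}_{\alpha,\beta}$, and then bootstraps through Lemma~\ref{lemma-1} a second time, forcing its non-$\mathscr{H}$ branch in each such coordinate. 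This is more structural and avoids the sign case-split and piecewise formulas; the price is that you must peek inside the construction of Lemma~\ref{lemma-1} to verify that the output is not $\mathscr{H}$-equivalent to $\mathbb{I}$, which you do correctly.

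One small point of presentation: your claim ``$\delta_i=\gamma_i^{-1}$'' is not something Lemma~\ref{lemma-1} forces; rather, since here $\operatorname{dom}(\varpi\varepsilon\varpi^{-1})_i=\operatorname{ran}(\varpi\varepsilon\varpi^{-1})_i=\mathbb{Z}\setminus\{-m_i\}$ and that map is the identity on this set, you are free to \emph{choose} $\delta_i=\gamma_i^{-1}$, which conveniently makes $\gamma_i(\varpi\varepsilon\varpi^{-1})_i\delta_i=\mathbb{I}_i$ without any further translation. With that reading, your domain computation $\operatorname{dom}\psi_i=\mathbb{Z}\setminus\{(0)\gamma_i^{-1}\}$ is valid, and in fact you obtain the stronger per-coordinate conclusion $\psi_i\notin H(\mathbb{I}_i)$ for every $i\in\mathbf{D}_{\alpha,\beta}$ (cf.\ Remark~\ref{remark-3}). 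Note, however, that your first case (taking $\psi=\varpi$ when $\varpi\notin H(\mathbb{I})$) does not by itself guarantee this per-coordinate property, though it suffices for the lemma as stated; the paper's first case has the same feature.
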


\begin{proof}
If $\alpha$ and $\beta$ are not $\mathscr{H}$-equivalent elements of the semigroup $\mathscr{I\!O}\!_{\infty}(\mathbb{Z}^n_{\operatorname{lex}})$, then by case 2) of the proof of Lemma~\ref{lemma-1} we obtain that $\mathbb{I}\mathfrak{C}\varpi=\gamma\beta\delta$ and the elements $\mathbb{I}$ and $\varpi$ are not $\mathscr{H}$-equivalent in $\mathscr{I\!O}\!_{\infty}(\mathbb{Z}^n_{\operatorname{lex}})$.

Next, we suppose that $\alpha\mathscr{H}\beta$ and put $\alpha=(\alpha_1,\ldots,\alpha_n)$ and $\beta=(\beta_1,\ldots,\beta_n)$. Then by Proposition~2.3 of \cite{GutikRepovs2012} the semigroup $\mathscr{I\!O}\!_{\infty}(\mathbb{Z})$ is bisimple and hence by Theorem~2.3 of \cite{CP} for every $i=1,\ldots,n$ there exist $\gamma_i,\delta_i\in \mathscr{I\!O}\!_{\infty}(\mathbb{Z})$ such that $\gamma_i\alpha_i\delta_i=\mathbb{I}_i$ is the unit of the semigroup $\mathscr{I\!O}\!_{\infty}(\mathbb{Z})$ and $\gamma_i\beta_i\delta_i\neq\mathbb{I}_i$ for each $i\in \mathbf{D}_{\alpha,\beta}$. Since $\alpha\mathscr{H}\beta$ and by Proposition~2.3$(v)$ of \cite{GutikPozdnyakova2014} the semigroup $\mathscr{I\!O}\!_{\infty}(\mathbb{Z}^n_{\operatorname{lex}})$ is isomorphic to the direct power $\left(\mathscr{I\!O}\!_{\infty}(\mathbb{Z})\right)^n$ we conclude that $\gamma_i\beta_i\delta_i$ is an element of the group of units of $\mathscr{I\!O}\!_{\infty}(\mathbb{Z})$ for each $i\in\{1,\ldots,n\}$, and moreover $\gamma_i\beta_i\delta_i=\mathbb{I}_i=\gamma_i\alpha_i\delta_i$ for any $i\in\{1,\ldots,n\}\setminus\mathbf{D}_{\alpha,\beta}$.

We denote $\gamma=(\gamma_1,\ldots,\gamma_n)$ and $\delta=(\delta_1,\ldots,\delta_n)$ and put $\kappa=(\kappa_1,\ldots,\kappa_n)=\gamma\beta\delta$. Then we have that $\mathbf{D}_{\alpha,\beta}=\mathbf{D}_{\mathbb{I},\kappa}$. Also the relation $\alpha\mathscr{H}\beta$ implies that $\mathbb{I}\mathscr{H}\kappa$, and since $\mathscr{I\!O}\!_{\infty}(\mathbb{Z}^n_{\operatorname{lex}})$ is an inverse semigroup we get that $\mathbb{I}\mathscr{H}\kappa^m$ for every integer $m$.  By Proposition~2.2 of \cite{GutikRepovs2012} the group of units of the semigroup $\mathscr{I\!O}\!_{\infty}(\mathbb{Z})$ is isomorphic to $\mathbb{Z}(+)$. Hence, this implies that without loss of generality we can assume that $(p)\kappa_i=p+m_i$, where $m_i\neq 0$, for every $i\in \mathbf{D}_{\alpha,\beta}$ because.

Next, for every integer $i=1,\ldots,n$ we define a partial map $\chi_i\colon \mathbb{Z}\rightharpoonup\mathbb{Z}$ in the following way:
\begin{itemize}
  \item[(a)] if $i\in\{1,\ldots,n\}\setminus\mathbf{D}_{\alpha,\beta}$, then we define $\chi_i\colon \mathbb{Z}\rightarrow \mathbb{Z}$ be the identity map;

  \item[(b)] if $i\in\mathbf{D}_{\alpha,\beta}$ and $m_i\geqslant 1$, then we define $\operatorname{dom}\chi_i=\mathbb{Z}$, $\operatorname{ran}\chi_i=\mathbb{Z}\setminus\{1,\ldots,m_i\}$ and
\begin{equation*}
(k)\chi_i=
\left\{
  \begin{array}{cl}
    k+m_i, & \hbox{if~}\, k\geqslant 1;\\
    k,     & \hbox{if~}\, k\leqslant 0;
  \end{array}
\right.
\end{equation*}

  \item[(c)] if $i\in\mathbf{D}_{\alpha,\beta}$ and $m_i\leqslant -1$, then we define $\operatorname{dom}\chi_i=\mathbb{Z}$, $\operatorname{ran}\chi_i=\mathbb{Z}\setminus\{m_i,\ldots,-1\}$ and
\begin{equation*}
(k)\chi_i=
\left\{
  \begin{array}{cl}
    k,     & \hbox{if~}\, k\geqslant 0;\\
    k+m_i, & \hbox{if~}\, k\leqslant -1.
  \end{array}
\right.
\end{equation*}
\end{itemize}
We put $\chi=(\chi_1,\ldots,\chi_n)$. The definition of the semigroup $\mathscr{I\!O}\!_{\infty}(\mathbb{Z}^n_{\operatorname{lex}})$ implies that $\chi$ and its inverse $\chi^{-1}$ are elements of $\mathscr{I\!O}\!_{\infty}(\mathbb{Z}^n_{\operatorname{lex}})$. Simple verifications show that $\mathbb{I}=\chi\chi^{-1}=\chi\mathbb{I}\chi^{-1}$. Also, since $\mathfrak{C}$ is a congruence on the semigroup $\mathscr{I\!O}\!_{\infty}(\mathbb{Z}^n_{\operatorname{lex}})$ we conclude that $\mathbb{I}=\chi\mathbb{I}\chi^{-1}\mathfrak{C}\chi\kappa\chi^{-1}$.

Now simple calculations imply that
\begin{itemize}
  \item[$(i)$] if $m_i>0$ then
\begin{equation*}
(k)\chi_i\kappa_i\chi_i^{-1}= \left\{
  \begin{array}{cl}
    k+m_i,            & \hbox{if~}\, k\geqslant 1;\\
    \hbox{undefined}, & \hbox{if~}\, -m_i<k\leqslant 0;\\
    k+m_i,            & \hbox{if~}\, k\leqslant-m_i,
  \end{array}
\right.
\end{equation*}
\end{itemize}
and similarly
\begin{itemize}
  \item[$(ii)$] if $m_i<0$ then
\begin{equation*}
(k)\chi_i\kappa_i\chi_i^{-1}= \left\{
  \begin{array}{cl}
    k+m_i,            & \hbox{if~}\, k\geqslant-m_i;\\
    \hbox{undefined}, & \hbox{if~}\, 0\leqslant k<-m_i;\\
    k+m_i,            & \hbox{if~}\, k\leqslant-1.
  \end{array}
\right.
\end{equation*}
\end{itemize}

Next we put $\psi=\chi\kappa\chi^{-1}$, and hence we obtain that $\mathbb{I}\mathfrak{C}\psi$ but $\operatorname{dom}\psi\neq\mathbb{Z}$. This completes the proof of our lemma.
\end{proof}

\begin{remark}\label{remark-3}
The proof of Lemma~\ref{lemma-2} implies that for element $\psi=(\psi_1,\ldots,\psi_n)$ the following property holds:
\begin{quote}
    $\psi_i$ is not $\mathscr{H}$-equivalent to the unit of the semigroup $\mathscr{I\!O}\!_{\infty}(\mathbb{Z})$ for every $i\in\mathbf{D}_{\alpha,\beta}$.
\end{quote}
\end{remark}

\begin{proposition}\label{proposition-4}
Let $\mathfrak{C}$ be a congruence on the semigroup $\mathscr{I\!O}\!_{\infty}(\mathbb{Z}^n_{\operatorname{lex}})$. Let $\alpha$ and $\beta$ be two distinct $\mathfrak{C}$-equivalent elements of the semigroup $\mathscr{I\!O}\!_{\infty}(\mathbb{Z}^n_{\operatorname{lex}})$. Then there exists a non-unit idempotent $\varepsilon$ in $\mathscr{I\!O}\!_{\infty}(\mathbb{Z}^n_{\operatorname{lex}})$ such that $\mathbb{I}\mathfrak{C}\varepsilon$ and $\mathbf{D}_{\mathbb{I},\varepsilon}=\mathbf{D}_{\alpha,\beta}$.
\end{proposition}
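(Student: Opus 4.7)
The plan is to invoke Lemma~\ref{lemma-2} to produce $\psi\in\mathscr{I\!O}\!_{\infty}(\mathbb{Z}^n_{\operatorname{lex}})$ with $\mathbb{I}\mathfrak{C}\psi$, with $\mathbf{D}_{\mathbb{I},\psi}=\mathbf{D}_{\alpha,\beta}$, and, by Remark~\ref{remark-3}, with $\psi_i$ not $\mathscr{H}$-related to $\mathbb{I}_i$ in $\mathscr{I\!O}\!_{\infty}(\mathbb{Z})$ for every $i\in\mathbf{D}_{\alpha,\beta}$. From $\psi$ I will take as the candidate idempotent
\begin{equation*}
\varepsilon=(\psi\psi^{-1})(\psi^{-1}\psi),
\end{equation*}
i.e., the partial identity on $\operatorname{dom}\psi\cap\operatorname{ran}\psi$.

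Three properties need verification. First, $\mathbb{I}\mathfrak{C}\varepsilon$ follows from a short congruence manipulation: starting from $\mathbb{I}\mathfrak{C}\psi$ and using $\psi\psi^{-1}\psi=\psi$, one obtains $\psi\psi^{-1}\mathfrak{C}\psi\mathfrak{C}\mathbb{I}$ and symmetrically $\psi^{-1}\psi\mathfrak{C}\mathbb{I}$, so $\varepsilon\mathfrak{C}\mathbb{I}\cdot\mathbb{I}=\mathbb{I}$. Second, $\varepsilon$ is an idempotent because $\psi\psi^{-1}$ and $\psi^{-1}\psi$ are idempotents of the inverse semigroup $\mathscr{I\!O}\!_{\infty}(\mathbb{Z}^n_{\operatorname{lex}})$ and therefore commute.

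For the third property, $\mathbf{D}_{\mathbb{I},\varepsilon}=\mathbf{D}_{\alpha,\beta}$, I work coordinate-wise under the isomorphism $\mathscr{I\!O}\!_{\infty}(\mathbb{Z}^n_{\operatorname{lex}})\cong\left(\mathscr{I\!O}\!_{\infty}(\mathbb{Z})\right)^n$: the component $\varepsilon_i$ is the partial identity on $\operatorname{dom}\psi_i\cap\operatorname{ran}\psi_i$. For $i\notin\mathbf{D}_{\alpha,\beta}$ we have $\psi_i=\mathbb{I}_i$, so $\varepsilon_i=\mathbb{I}_i$. For $i\in\mathbf{D}_{\alpha,\beta}$, Remark~\ref{remark-3} combined with Proposition~2.1$(vii)$ of \cite{GutikRepovs2012} forces $\operatorname{dom}\psi_i\neq\mathbb{Z}$ or $\operatorname{ran}\psi_i\neq\mathbb{Z}$, hence $\operatorname{dom}\psi_i\cap\operatorname{ran}\psi_i$ is a proper subset of $\mathbb{Z}$ and $\varepsilon_i\neq\mathbb{I}_i$. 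Since $\mathbf{D}_{\alpha,\beta}\neq\varnothing$, this simultaneously witnesses that $\varepsilon$ is a non-unit idempotent.

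The only real subtlety is the choice of $\varepsilon$: the simpler candidates $\psi\psi^{-1}$ or $\psi^{-1}\psi$ may fail to differ from $\mathbb{I}$ at some $i\in\mathbf{D}_{\alpha,\beta}$, because in the non-$\mathscr{H}$-equivalent branch of Lemma~\ref{lemma-1} one is guaranteed only \emph{one} of the two conditions $\operatorname{dom}\psi_i\neq\mathbb{Z}$, $\operatorname{ran}\psi_i\neq\mathbb{Z}$. Taking the meet $(\psi\psi^{-1})(\psi^{-1}\psi)$ of the two idempotents handles both situations uniformly.
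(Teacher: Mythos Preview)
Your proof is correct and follows essentially the same approach as the paper: invoke Lemma~\ref{lemma-2} and Remark~\ref{remark-3} to obtain $\psi$, then take $\varepsilon=\psi\psi^{-1}\cdot\psi^{-1}\psi$ and verify the three required properties. Your congruence manipulation (left- and right-multiplying $\mathbb{I}\mathfrak{C}\psi$ by $\psi\psi^{-1}$ and $\psi^{-1}\psi$) is a minor variant of the paper's (which passes through $\mathbb{I}\mathfrak{C}\psi^{-1}$), and your explicit remark on why the meet of both idempotents is needed makes the argument slightly clearer than the original.
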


\begin{proof}
Lemma~\ref{lemma-2} implies that there exists an element $\psi$ of the semigroup $\mathscr{I\!O}\!_{\infty}(\mathbb{Z}^n_{\operatorname{lex}})$ such that $\psi\mathfrak{C}\mathbb{I}$, $\mathbf{D}_{\mathbb{I},\psi}= \mathbf{D}_{\alpha,\beta}$ and elements $\mathbb{I}$ and $\psi$ are not $\mathscr{H}$-equivalent in $\mathscr{I\!O}\!_{\infty}(\mathbb{Z}^n_{\operatorname{lex}})$. Also, by Remark~\ref{remark-3} for every integer $i\in\mathbf{D}_{\alpha,\beta}$ the element $\psi_i$ is not $\mathscr{H}$-equivalent to the unit $\mathbb{I}_i$ of the semigroup $\mathscr{I\!O}\!_{\infty}(\mathbb{Z})$. This implies that for every integer $i\in\mathbf{D}_{\alpha,\beta}$ at least one of the following conditions holds:
\begin{equation*}
    \psi_i\psi_i^{-1}\neq\mathbb{I}_i \qquad \hbox{or} \qquad \psi_i^{-1}\psi_i\neq\mathbb{I}_i \qquad \hbox{in} \qquad \mathscr{I\!O}\!_{\infty}(\mathbb{Z}).
\end{equation*}

Since $\mathscr{I\!O}\!_{\infty}(\mathbb{Z}^n_{\operatorname{lex}})$ is an inverse semigroup we have that $\mathbb{I}\mathfrak{C}\psi^{-1}$. This implies that $\mathbb{I}\mathfrak{C}\psi\psi^{-1}$ and $\mathbb{I}\mathfrak{C}\psi^{-1}\psi$, and hence we get that $\mathbb{I}\mathfrak{C}\varepsilon$, where $\varepsilon=\psi\psi^{-1}\psi^{-1}\psi$. The above arguments show that $\mathbf{D}_{\mathbb{I},\varepsilon}=\mathbf{D}_{\alpha,\beta}$.
\end{proof}

\begin{proposition}\label{proposition-5}
Let $\mathfrak{C}$ be a congruence on the semigroup $\mathscr{I\!O}\!_{\infty}(\mathbb{Z}^n_{\operatorname{lex}})$. Let $\alpha$ and $\beta$ be two distinct $\mathfrak{C}$-equivalent elements of the semigroup $\mathscr{I\!O}\!_{\infty}(\mathbb{Z}^n_{\operatorname{lex}})$. Then $\mathbb{I}\mathfrak{C}\varepsilon$ for any idempotent $\varepsilon$ in $\mathscr{I\!O}\!_{\infty}(\mathbb{Z}^n_{\operatorname{lex}})$ such that   $\mathbf{D}_{\mathbb{I},\varepsilon}=\mathbf{D}_{\alpha,\beta}$.
\end{proposition}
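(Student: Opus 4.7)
The plan is to upgrade Proposition~\ref{proposition-4} from ``there exists one such idempotent'' to ``every such idempotent works''. Write $S:=\mathbf{D}_{\alpha,\beta}$ and, by Proposition~\ref{proposition-4}, fix a non-unit idempotent $\varepsilon_0=(\varepsilon_{0,1},\ldots,\varepsilon_{0,n})$ with $\mathbb{I}\mathfrak{C}\varepsilon_0$ and $\mathbf{D}_{\mathbb{I},\varepsilon_0}=S$, so each $\varepsilon_{0,i}$ with $i\in S$ is a partial identity on some proper cofinite subset $A_{0,i}\subsetneq\mathbb{Z}$ with nonempty finite complement $F_{0,i}$, while $\varepsilon_{0,i}=\mathbb{I}_i$ for $i\notin S$. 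For an arbitrary target idempotent $\varepsilon=(\varepsilon_1,\ldots,\varepsilon_n)$ with $\mathbf{D}_{\mathbb{I},\varepsilon}=S$, set $F_i:=\mathbb{Z}\setminus\operatorname{dom}\varepsilon_i$, which is nonempty and finite for $i\in S$ and empty for $i\notin S$. I will pass from $\varepsilon_0$ to $\varepsilon$ using two moves that keep us in the $\mathfrak{C}$-class of $\mathbb{I}$: conjugation by a unit, and the passage from an idempotent to any larger one in the natural order on the band.

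The first move exploits Proposition~2.2 of \cite{GutikRepovs2012}: the group of units of $\mathscr{I\!O}\!_{\infty}(\mathbb{Z})$ is $\mathbb{Z}(+)$, so units of $\mathscr{I\!O}\!_{\infty}(\mathbb{Z}^n_{\operatorname{lex}})$ are tuples of shifts. For any such unit $\gamma$ we have $\mathbb{I}=\gamma\gamma^{-1}\mathfrak{C}\gamma\varepsilon_0\gamma^{-1}$, and a routine calculation shows that if $\gamma_i$ is the shift by $k_i$, then the $i$-th coordinate of $\gamma\varepsilon_0\gamma^{-1}$ is the partial identity on $A_{0,i}-k_i$ for $i\in S$ and stays $\mathbb{I}_i$ for $i\notin S$. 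Because the $\mathfrak{C}$-class of $\mathbb{I}$ is closed under products and idempotents commute in an inverse semigroup, products of such conjugates again lie in that class and are again idempotents. Fix $a_i\in F_{0,i}$ for each $i\in S$, enumerate $F_i=\{c_{i,1},\ldots,c_{i,r_i}\}$, let $r:=\max_{i\in S}r_i$, and for $t=1,\ldots,r$ define the unit $\gamma^{(t)}$ whose $i$-th coordinate is the shift by $a_i-c_{i,t}$ when $i\in S$ and $t\leqslant r_i$, and is $\mathbb{I}_i$ otherwise. Setting $f:=\prod_{t=1}^{r}\gamma^{(t)}\varepsilon_0(\gamma^{(t)})^{-1}$, we obtain $\mathbb{I}\mathfrak{C}f$, with $f_i=\mathbb{I}_i$ for $i\notin S$, and for $i\in S$ the complement of $\operatorname{dom}f_i$ equals $\bigcup_{t=1}^{r}(F_{0,i}-k^{(t)}_i)$ and contains each $c_{i,t}=a_i-(a_i-c_{i,t})$, hence contains the whole of $F_i$.

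The second move closes the argument: from the previous paragraph $\operatorname{dom}f_i\subseteq\operatorname{dom}\varepsilon_i$ holds in every coordinate, i.e.\ $f\leqslant\varepsilon$ in the semilattice of idempotents, so $\varepsilon f=f$; multiplying $\mathbb{I}\mathfrak{C}f$ on the left by $\varepsilon$ yields $\varepsilon=\varepsilon\mathbb{I}\mathfrak{C}\varepsilon f=f\mathfrak{C}\mathbb{I}$. The main thing to watch out for is the bookkeeping needed to make a single family $\gamma^{(1)},\ldots,\gamma^{(r)}$ work simultaneously in every coordinate $i\in S$ when the sizes $|F_i|$ differ; this is sidestepped by using the uniform upper bound $r$ and by only requiring that the shifted copies of $F_{0,i}$ \emph{cover} $F_i$, the remaining trim from ``cover'' down to ``equals'' being furnished by the order-theoretic step at the end.
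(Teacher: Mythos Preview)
Your argument is correct and uses the same three ingredients as the paper's proof: Proposition~\ref{proposition-4} to produce a seed idempotent, conjugation by tuples of integer shifts to relocate the missing points, and the observation that the $\mathfrak{C}$-class of $\mathbb{I}$ is upward-closed in $E(\mathscr{I\!O}\!_{\infty}(\mathbb{Z}^n_{\operatorname{lex}}))$. The only difference is the order in which these moves are applied: the paper first passes \emph{up} from the seed to single-hole idempotents $\varepsilon_i^\circ$ in each coordinate $i\in\mathbf{D}_{\alpha,\beta}$, then conjugates those by shifts to realise an arbitrary single hole, and finally multiplies such single-hole idempotents to obtain the target $\varepsilon$; you instead conjugate the seed itself several times, multiply the conjugates to land \emph{below} $\varepsilon$, and then pass up once at the end. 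The paper's ordering has the mild advantage of avoiding the simultaneous bookkeeping across coordinates that you flag in your last paragraph, since once one has the single-hole building blocks each coordinate can be handled independently; your ordering is a shade more direct. Either way the content is the same.
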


\begin{proof}
By Proposition~\ref{proposition-4} there exists an idempotent $\varepsilon$ of the semigroup $\mathscr{I\!O}\!_{\infty}(\mathbb{Z}^n_{\operatorname{lex}})$ such that $\mathbb{I}\mathfrak{C}\varepsilon$ and $\mathbf{D}_{\mathbb{I},\varepsilon}=\mathbf{D}_{\alpha,\beta}$. We fix an arbitrary non-unit idempotent $\iota\in \mathscr{I\!O}\!_{\infty}(\mathbb{Z}^n_{\operatorname{lex}})$ such that $\varepsilon\leqslant\iota$ in $E(\mathscr{I\!O}\!_{\infty}(\mathbb{Z}^n_{\operatorname{lex}}))$. Then we have that $\iota\mathbb{I}=\iota$ and hence the relation $\mathbb{I}\mathfrak{C}\varepsilon$ implies that $\iota=\iota\mathbb{I} \mathfrak{C}\iota\varepsilon=\varepsilon\mathfrak{C}\mathbb{I}$. Therefore, for every $i\in\mathbf{D}_{\alpha,\beta}$ there exists an idempotent $\varepsilon_i^\circ$ such that $\varepsilon_i^\circ\mathfrak{C}\mathbb{I}$ and the set $\mathbb{Z}\setminus\operatorname{dom}\varepsilon_i^\circ$ is singleton. We put $\{m_i\}=\mathbb{Z}\setminus\operatorname{dom}\varepsilon_i^\circ$ for every integer $i\in\mathbf{D}_{\alpha,\beta}$. We fix an arbitrary integer $p_i$ for $i\in\mathbf{D}_{\alpha,\beta}$ and define the map $\varrho_i\colon\mathbb{Z}\rightarrow\mathbb{Z}$ by the formula:
\begin{equation*}
    (j)\varrho_i=j-m_i+p_i, \qquad \hbox{for every}\quad j\in\mathbb{Z}.
\end{equation*}
Then $\varrho_i$ is an element of the group of units of the semigroup $\mathscr{I\!O}\!_{\infty}(\mathbb{Z})$ and hence $\varrho_i\varrho_i^{-1}=\varrho_i^{-1}\varrho_i=\mathbb{I}_i$ in $\mathscr{I\!O}\!_{\infty}(\mathbb{Z})$. Moreover, it is obvious that $\varrho_i^{-1}\varepsilon_i^\circ\varrho_i$ is an idempotent of the semigroup $\mathscr{I\!O}\!_{\infty}(\mathbb{Z})$ such that $\operatorname{dom}(\varrho_i^{-1}\varepsilon_i^\circ\varrho_i)=\mathbb{Z}\setminus \{p_i\}$. Also, we obtained that $\mathbb{I}_i=\varrho_i^{-1}\mathbb{I}_i\varrho_i \mathfrak{C}\varrho_i^{-1}\varepsilon_i^\circ\varrho_i$ in $\mathscr{I\!O}\!_{\infty}(\mathbb{Z})$. Now the definition of the semigroup $\mathscr{I\!O}\!_{\infty}(\mathbb{Z})$ implies that $\mathbb{I}\mathfrak{C}\iota_i^\circ$ for any idempotent $\iota$ in $\mathscr{I\!O}\!_{\infty}(\mathbb{Z})$, because every idempotent $\iota$ in the semigroup $\mathscr{I\!O}\!_{\infty}(\mathbb{Z})$ is equal to a product of finitely many idempotents of the form $\iota_i^\circ$, $i\in\{1,\ldots,n\}$, with the property that the set $\mathbb{Z}\setminus\operatorname{dom}\iota_i^\circ$ is singleton. Then for every idempotent $\varepsilon$ of the semigroup $\mathscr{I\!O}\!_{\infty}(\mathbb{Z}^n_{\operatorname{lex}})$ with the property $\mathbf{D}_{\mathbb{I},\varepsilon}=\mathbf{D}_{\alpha,\beta}$ we have that
\begin{equation*}
    \varepsilon=\varepsilon_{i_1}^\circ\cdot\ldots\cdot\varepsilon_{i_k}^\circ, \qquad \hbox{where} \quad \left\{i_1,\ldots,i_k\right\}=\mathbf{D}_{\alpha,\beta},
\end{equation*}
and hence $\mathbb{I}\mathfrak{C}\varepsilon$. This completes the proof of the proposition.
\end{proof}

\begin{theorem}\label{theorem-6}
Let $\mathfrak{C}$ be a congruence on the semigroup $\mathscr{I\!O}\!_{\infty}(\mathbb{Z}^n_{\operatorname{lex}})$. Then the following statements hold:
\begin{itemize}
  \item[$(i)$] If $\Delta_{\mathscr{I\!O}\!_{\infty}(\mathbb{Z}^n_{\operatorname{lex}})}\subseteq \mathfrak{C}\subseteq\sigma_{[i_m]}$ for some $i_m\in\{1,\ldots,n\}$, then either $\Delta_{\mathscr{I\!O}\!_{\infty}(\mathbb{Z}^n_{\operatorname{lex}})}= \mathfrak{C}$ or $\mathfrak{C}=\sigma_{[i_m]}$.
  \item[$(ii)$] If $\sigma_{[i_1,\ldots,i_m]}\subseteq \mathfrak{C}\subseteq\sigma_{[i_1,\ldots,i_m,i_{m+1}]}$, for any subset $\{i_1,\ldots,i_m,i_{m+1}\}\subseteq\{1,\ldots,n\}$, then either $\sigma_{[i_1,\ldots,i_m]}= \mathfrak{C}$ or $\mathfrak{C}=\sigma_{[i_1,\ldots,i_m,i_{m+1}]}$.
\end{itemize}
\end{theorem}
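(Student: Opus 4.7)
\emph{Plan.} The strategy is to derive both parts from Proposition~\ref{proposition-5} combined with the description of $\sigma_{[i_1,\ldots,i_k]}$ given by Propositions~2.11--2.13 of \cite{GutikPozdnyakova2014}: a pair $\alpha\,\sigma_{[i_1,\ldots,i_k]}\,\beta$ is characterized by $\alpha_j=\beta_j$ for $j\notin\{i_1,\ldots,i_k\}$ together with $\alpha_{i_j}\,\sigma\,\beta_{i_j}$ in $\mathscr{I\!O}\!_{\infty}(\mathbb{Z})$ for each $j$. The workhorse is the following ``multiplication trick'': if $\alpha\,\sigma_{[i]}\,\beta$ is realised by an idempotent $\varepsilon_{i}^\circ$ and $\mathbb{I}\,\mathfrak{C}\,\varepsilon_{i}^\circ$, then
\begin{equation*}
\alpha=\alpha\mathbb{I}\,\mathfrak{C}\,\alpha\varepsilon_{i}^\circ=\beta\varepsilon_{i}^\circ\,\mathfrak{C}\,\beta\mathbb{I}=\beta,
\end{equation*}
so $\alpha\,\mathfrak{C}\,\beta$ by transitivity of $\mathfrak{C}$.

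For part $(i)$, I would assume $\Delta_{\mathscr{I\!O}\!_{\infty}(\mathbb{Z}^n_{\operatorname{lex}})}\subsetneq\mathfrak{C}\subseteq\sigma_{[i_m]}$ and pick distinct $\mathfrak{C}$-related elements $\alpha,\beta$. The containment $\mathfrak{C}\subseteq\sigma_{[i_m]}$ forces $\mathbf{D}_{\alpha,\beta}\subseteq\{i_m\}$ and, since $\alpha\neq\beta$, equality $\mathbf{D}_{\alpha,\beta}=\{i_m\}$ holds. Proposition~\ref{proposition-5} then supplies $\mathbb{I}\,\mathfrak{C}\,\varepsilon_{i_m}^\circ$ for every non-identity idempotent of the form $\varepsilon_{i_m}^\circ$. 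Applying the multiplication trick to any witnessing idempotent of an arbitrary pair $\alpha'\,\sigma_{[i_m]}\,\beta'$ yields $\alpha'\,\mathfrak{C}\,\beta'$, proving $\sigma_{[i_m]}\subseteq\mathfrak{C}$.

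For part $(ii)$, I would assume $\sigma_{[i_1,\ldots,i_m]}\subsetneq\mathfrak{C}\subseteq\sigma_{[i_1,\ldots,i_m,i_{m+1}]}$ and pick $\alpha\,\mathfrak{C}\,\beta$ that are not $\sigma_{[i_1,\ldots,i_m]}$-equivalent. The upper bound gives $\mathbf{D}_{\alpha,\beta}\subseteq\{i_1,\ldots,i_m,i_{m+1}\}$, while the failure of $\sigma_{[i_1,\ldots,i_m]}$-equivalence—under the characterization recalled above—forces $i_{m+1}\in\mathbf{D}_{\alpha,\beta}$. Setting $J=\mathbf{D}_{\alpha,\beta}\cap\{i_1,\ldots,i_m\}$, I would use $\sigma_{[i_1,\ldots,i_m]}\subseteq\mathfrak{C}$ to replace the $j$-coordinates of $\alpha$ (for $j\in J$) by those of $\beta$—this is legal because $\alpha_j$ and $\beta_j$ are $\sigma$-equivalent in $\mathscr{I\!O}\!_{\infty}(\mathbb{Z})$—producing $\alpha'$ with $\alpha\,\mathfrak{C}\,\alpha'\,\mathfrak{C}\,\beta$ and $\mathbf{D}_{\alpha',\beta}=\{i_{m+1}\}$. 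Proposition~\ref{proposition-5} applied to $(\alpha',\beta)$ then yields $\mathbb{I}\,\mathfrak{C}\,\varepsilon_{i_{m+1}}^\circ$ for every such idempotent, and the multiplication trick delivers $\sigma_{[i_{m+1}]}\subseteq\mathfrak{C}$. Composing this with the assumed $\sigma_{[i_1,\ldots,i_m]}\subseteq\mathfrak{C}$ inside the transitive $\mathfrak{C}$, and recalling $\sigma_{[i_1,\ldots,i_m,i_{m+1}]}=\sigma_{[i_1,\ldots,i_m]}\circ\sigma_{[i_{m+1}]}$, closes the proof.

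The step I expect to require the most care is the coordinate-replacement in part $(ii)$: one must verify that the element $\alpha'$ obtained by swapping finitely many coordinates is actually $\sigma_{[i_1,\ldots,i_m]}$-equivalent to $\alpha$ (and therefore $\mathfrak{C}$-equivalent). This reduces, via the characterization from \cite{GutikPozdnyakova2014}, to exhibiting a single idempotent in each relevant factor that equalises $\alpha_j$ and $\beta_j$, which is straightforward once the correct form of $\sigma_{[i_1,\ldots,i_m]}$ is in hand. The rest is bookkeeping with Proposition~\ref{proposition-5} and the multiplication trick.
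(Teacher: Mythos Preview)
Your proposal is correct and follows essentially the same strategy as the paper: recall the idempotent characterization of $\sigma_{[i_1,\ldots,i_k]}$ from \cite{GutikPozdnyakova2014}, use Propositions~\ref{proposition-4}/\ref{proposition-5} to produce idempotents $\mathfrak{C}$-equivalent to $\mathbb{I}$, and then push these through the ``multiplication trick'' to force the larger congruence into $\mathfrak{C}$. The paper's proof is extremely terse---it simply records the characterization and then says ``applying Proposition~\ref{proposition-4} we get the statement''---so your write-up is considerably more explicit. Two minor differences are worth noting. First, you cite Proposition~\ref{proposition-5} rather than Proposition~\ref{proposition-4}; this is in fact the more accurate citation, since what is needed is $\mathbb{I}\,\mathfrak{C}\,\varepsilon$ for \emph{every} idempotent with the right $\mathbf{D}$-set, and the proof of Proposition~\ref{proposition-5} already delivers single-coordinate idempotents $\iota_i^\circ$ for each $i\in\mathbf{D}_{\alpha,\beta}$. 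Second, your coordinate-replacement step in part~$(ii)$---passing from $(\alpha,\beta)$ to $(\alpha',\beta)$ with $\mathbf{D}_{\alpha',\beta}=\{i_{m+1}\}$---is a clean alternative to invoking the single-coordinate idempotents hidden in the proof of Proposition~\ref{proposition-5}; the paper leaves this entire passage implicit. Your caution about verifying $\alpha\,\sigma_{[i_1,\ldots,i_m]}\,\alpha'$ is well placed and the verification is exactly as you describe: the upper bound $\mathfrak{C}\subseteq\sigma_{[i_1,\ldots,i_{m+1}]}$ guarantees $\alpha_{i_j}\,\sigma\,\beta_{i_j}$ on the relevant coordinates, which is all that is needed.
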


\begin{proof}
By Proposition~2.15 from \cite{GutikPozdnyakova2014} we have that for any collection $\{i_1,\ldots,i_k\}\subseteq\{1,\ldots,n\}$ of distinct indices, $k\le n$, and, hence, $\alpha\sigma_{[i_1,\ldots,i_k]}\beta$ in $\mathscr{I\!O}\!_{\infty}(\mathbb{Z}^n_{\operatorname{lex}})$ if and only if $\alpha\varepsilon_{i_1}^\circ\ldots\varepsilon_{i_k}^\circ= \beta\varepsilon_{i_1}^\circ\ldots\varepsilon_{i_k}^\circ$ for some idempotents $\varepsilon_{i_1}^\circ,\ldots,\varepsilon_{i_k}^\circ\in \mathscr{I\!O}\!_{\infty}(\mathbb{Z}^n_{\operatorname{lex}})$. This implies that $\mathbb{I}\sigma_{[i_1,\ldots,i_k]}\varepsilon$ for every idempotent $\varepsilon$ of the semigroup $\mathscr{I\!O}\!_{\infty}(\mathbb{Z}^n_{\operatorname{lex}})$ such that $\mathbf{D}_{\mathbb{I},\varepsilon}\subseteq\{i_1,\ldots,i_k\}$. Then applying Proposition~\ref{proposition-4} we get the statement of the theorem.
\end{proof}

For any proper subset if indices $\mathrm{I}\subset\{1,\ldots,n\}$ we define a map $\pi^{\mathrm{I}}\colon \mathscr{I\!O}\!_{\infty}(\mathbb{Z}^n_{\operatorname{lex}})\rightarrow \mathscr{I\!O}\!_{\infty}(\mathbb{Z}^n_{\operatorname{lex}})$ by the formula $(\alpha_1,\ldots,\alpha_n)\pi_{\mathrm{I}}= (\beta_1,\ldots,\beta_n)$, where
\begin{equation*}
    \beta_i=
\left\{
  \begin{array}{ll}
    \alpha_i, & \hbox{if~} i\in \mathrm{I};\\
    \mathbb{I}_i, & \hbox{if~} i\in\{1,\ldots,n\}\setminus \mathrm{I}.
  \end{array}
\right.
\end{equation*}
 Simple verifications show that such defined map $\pi^{\mathrm{I}}\colon \mathscr{I\!O}\!_{\infty}(\mathbb{Z}^n_{\operatorname{lex}})\rightarrow \mathscr{I\!O}\!_{\infty}(\mathbb{Z}^n_{\operatorname{lex}})$ is a homomorphism. Let ${\pi^{\mathrm{I}}}^\sharp$ be the congruence on $\mathscr{I\!O}\!_{\infty}(\mathbb{Z}^n_{\operatorname{lex}})$ which is generated by the homomorphism $\pi^{\mathrm{I}}$.

\begin{proposition}\label{proposition-7}
Let $\mathrm{I}$ be an arbitrary proper subset of $\{1,\ldots,n\}$. Then
${\pi^{\mathrm{I}}}^\sharp_{\min}= \sigma_{[i_1]}\circ\ldots\circ\sigma_{[i_k]}$, where $\{i_1,\ldots,i_k\}=\{1,\ldots,n\}\setminus\mathrm{I}$.
\end{proposition}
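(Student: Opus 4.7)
The plan is to unpack both sides of the claimed equality in coordinates under the isomorphism $\mathscr{I\!O}\!_{\infty}(\mathbb{Z}^n_{\operatorname{lex}})\cong (\mathscr{I\!O}\!_{\infty}(\mathbb{Z}))^n$ supplied by Proposition~2.3$(iv)$ of \cite{GutikPozdnyakova2014} and then verify the two inclusions directly.

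First, from the very formula defining $\pi^{\mathrm{I}}$ I read off the characterization
\begin{equation*}
    \alpha\,{\pi^{\mathrm{I}}}^\sharp\,\beta \qquad\Longleftrightarrow\qquad \alpha_i=\beta_i \;\; \text{for every } i\in\mathrm{I}.
\end{equation*}
Combined with the definition of $\rho_{\min}$ from \cite[Section~III.2]{Petrich1984}, this yields: $\alpha\,{\pi^{\mathrm{I}}}^\sharp_{\min}\,\beta$ if and only if there exists an idempotent $\varepsilon=(\varepsilon_1,\ldots,\varepsilon_n)$ with $\alpha\varepsilon=\beta\varepsilon$ such that, in addition, $\varepsilon_i=\alpha_i^{-1}\alpha_i=\beta_i^{-1}\beta_i$ for every $i\in\mathrm{I}$. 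On the other side, Proposition~2.15 of \cite{GutikPozdnyakova2014} (recalled inside the proof of Theorem~\ref{theorem-6}) describes $\sigma_{[i_1]}\circ\cdots\circ\sigma_{[i_k]}$ by: $\alpha\,\sigma_{[i_1,\ldots,i_k]}\,\beta$ iff $\alpha\eta=\beta\eta$ for some idempotent $\eta=\varepsilon_{i_1}^\circ\cdots\varepsilon_{i_k}^\circ$, that is, an idempotent whose components at positions in $\mathrm{I}$ are the respective identities $\mathbb{I}_i$.

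For the inclusion $\sigma_{[i_1]}\circ\cdots\circ\sigma_{[i_k]}\subseteq{\pi^{\mathrm{I}}}^\sharp_{\min}$, I start from such an $\eta$. Comparing the $i$-th coordinates of $\alpha\eta=\beta\eta$ for $i\in\mathrm{I}$, where $\eta_i=\mathbb{I}_i$, forces $\alpha_i=\beta_i$, and hence also $\alpha_i^{-1}\alpha_i=\beta_i^{-1}\beta_i$. I then replace $\eta$ by the idempotent $\tilde\eta$ obtained by setting $\tilde\eta_i=\alpha_i^{-1}\alpha_i$ for $i\in\mathrm{I}$ and $\tilde\eta_i=\eta_i$ otherwise; this is still a legitimate element of $\mathscr{I\!O}\!_{\infty}(\mathbb{Z}^n_{\operatorname{lex}})$ under the coordinate identification, the equality $\alpha\tilde\eta=\beta\tilde\eta$ persists coordinatewise, and the extra $\rho_{\min}$ condition is satisfied by construction. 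For the reverse inclusion, given an idempotent $\varepsilon$ witnessing $\alpha\,{\pi^{\mathrm{I}}}^\sharp_{\min}\,\beta$, the standard identity $aa^{-1}a=a$ applied at each coordinate $i\in\mathrm{I}$ turns $\alpha_i\varepsilon_i=\beta_i\varepsilon_i$ with $\varepsilon_i=\alpha_i^{-1}\alpha_i=\beta_i^{-1}\beta_i$ into $\alpha_i=\beta_i$. The idempotent $\eta=\varepsilon_{i_1}^\circ\cdots\varepsilon_{i_k}^\circ$ built from the components of $\varepsilon$ at the coordinates outside $\mathrm{I}$ then satisfies $\alpha\eta=\beta\eta$ coordinatewise (trivially on $\mathrm{I}$ and by hypothesis off $\mathrm{I}$), so Proposition~2.15 gives $\alpha\,\sigma_{[i_1,\ldots,i_k]}\,\beta$.

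I do not anticipate any real obstacle: the argument is coordinate-by-coordinate bookkeeping inside the direct power. The only conceptual point worth emphasising is that the $\rho_{\min}$ side-condition ``$\varepsilon_i=\alpha_i^{-1}\alpha_i$ for $i\in\mathrm{I}$'' is precisely what lets me trivialise the witnessing idempotent on the coordinates in $\mathrm{I}$, which in turn is exactly the normal form of the idempotents appearing in the characterisation of $\sigma_{[i_1]}\circ\cdots\circ\sigma_{[i_k]}$ from Proposition~2.15.
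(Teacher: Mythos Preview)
Your argument is correct and follows essentially the same route as the paper's proof: both unpack $\pi^{\mathrm{I}}{}^\sharp_{\min}$ and $\sigma_{[i_1,\ldots,i_k]}$ coordinatewise via Proposition~2.15 of \cite{GutikPozdnyakova2014}, and in each direction modify the witnessing idempotent on the $\mathrm{I}$-coordinates (to $\alpha_i^{-1}\alpha_i$ or to $\mathbb{I}_i$, respectively) using the identity $\alpha_i\alpha_i^{-1}\alpha_i=\alpha_i$. The only cosmetic difference is that you state the characterisation of $\pi^{\mathrm{I}}{}^\sharp$ explicitly at the outset, whereas the paper invokes it implicitly.
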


\begin{proof}
Suppose that $\alpha(\sigma_{[i_1]}\circ\ldots\circ \sigma_{[i_k]}) \beta$ in $\mathscr{I\!O}\!_{\infty}(\mathbb{Z}^n_{\operatorname{lex}})$ for some elements $\alpha=(\alpha_1,\ldots,\alpha_n)$ and $\beta=(\beta_1,\ldots,\beta_n)$.  Proposition~2.15 of \cite{GutikPozdnyakova2014} implies that $\alpha\varepsilon_{i_1}^\circ\ldots\varepsilon_{i_k}^\circ= \beta\varepsilon_{i_1}^\circ\ldots\varepsilon_{i_k}^\circ$ for some idempotent $\varepsilon=(\varepsilon_{1},\ldots,\varepsilon_{n})$ such that
$\varepsilon_{i}=\mathbb{I}_i$ for all $i\in\mathrm{I}$,
i.e., $\alpha\varepsilon=\beta\varepsilon$. Then we have that $\alpha_i=\beta_i$ for all $i\in\mathrm{I}$, and hence $\alpha\varepsilon^*=\beta\varepsilon^*$ for $\varepsilon^*=(\varepsilon^*_{1},\ldots,\varepsilon^*_{n})$, where
\begin{equation*}
    \varepsilon^*_{i}=
\left\{
  \begin{array}{cl}
    \alpha_i^{-1}\alpha_i=\beta_i^{-1}\beta_i, & \hbox{if~} i\in \mathrm{I};\\
    \varepsilon_{i}, & \hbox{if~} i\in\{1,\ldots,n\}\setminus \mathrm{I}.
  \end{array}
\right.
\end{equation*}
It is obvious that $\varepsilon^*{\pi^\mathrm{I}}^\sharp\alpha^{-1}\alpha{\pi^\mathrm{I}}^\sharp\beta^{-1}\beta$. This implies the inclusion $\sigma_{[i_1]}\circ\ldots\circ\sigma_{[i_k]} \subseteq{\pi^\mathrm{I}}^\sharp_{\min}$.

Suppose that $\alpha{\pi^\mathrm{I}}^\sharp_{\min}\beta$ in $\mathscr{I\!O}\!_{\infty}(\mathbb{Z}^n_{\operatorname{lex}})$ for some elements $\alpha=(\alpha_1,\ldots,\alpha_n)$ and $\beta=(\beta_1,\ldots,\beta_n)$. Then there exists an idempotent $\varepsilon=(\varepsilon_{1},\ldots,\varepsilon_{n})$ in $\mathscr{I\!O}\!_{\infty}(\mathbb{Z}^n_{\operatorname{lex}})$ such that $\alpha\varepsilon=\beta\varepsilon$ and $\varepsilon{\pi^\mathrm{I}}^\sharp \alpha^{-1}\alpha{\pi^\mathrm{I}}^\sharp\beta^{-1}\beta$. The last two equalities imply that $\alpha_i^{-1}\alpha_i=\beta_i^{-1}\beta_i=\varepsilon_i$ for all $i\in\mathrm{I}$. This and the equality $\alpha\varepsilon=\beta\varepsilon$ imply that $\alpha_i\varepsilon_i= \beta_i\varepsilon_i$  for all $i\in\mathrm{I}$ and hence we obtain that $\alpha_i=\alpha_i\alpha_i^{-1}\alpha_i= \alpha_i\varepsilon_i= \beta_i\varepsilon_i= \beta_i\beta_i^{-1}\beta_i=\beta_i$ for all $i\in\mathrm{I}$. Therefore we have that $\alpha\varepsilon^*=\beta\varepsilon^*$, where the idempotent $\varepsilon^*=(\varepsilon^*_{1},\ldots,\varepsilon^*_{n})$ defined in the following way
\begin{equation*}
    \varepsilon^*_{i}=
\left\{
  \begin{array}{cl}
    \alpha_i^{-1}\alpha_i, & \hbox{if~} i\in \mathrm{I};\\
    \varepsilon_{i}, & \hbox{if~} i\in\{1,\ldots,n\}\setminus \mathrm{I}.
  \end{array}
\right.
\end{equation*}
This implies that $\alpha\varepsilon_{i_1}^\circ\ldots\varepsilon_{i_k}^\circ= \beta\varepsilon_{i_1}^\circ\ldots\varepsilon_{i_k}^\circ$. By Proposition~2.15 of \cite{GutikPozdnyakova2014} we get that  $\alpha(\sigma_{[i_1]}\circ\ldots\circ \sigma_{[i_k]}) \beta$ in $\mathscr{I\!O}\!_{\infty}(\mathbb{Z}^n_{\operatorname{lex}})$, and hence we get that ${\pi^i}^\sharp_{\min}\subseteq \sigma_{[i_1]}\circ\ldots\circ \sigma_{[i_k]}$. This completes the proof of equality ${\pi^\mathrm{I}}^\sharp_{\min}= \sigma_{[i_1]}\circ\ldots\circ \sigma_{[i_k]}$.
\end{proof}

\section*{Acknowledgements}

The authors are grateful to the referee for several useful comments
and suggestions.


\end{document}